\DeclareMathOperator{\Ima}{Im}
\DeclareMathOperator{\op}{op}
\DeclareMathOperator{\HS}{HS}
\DeclareMathOperator{\Hamm}{Hamm}
\DeclareMathOperator{\Frob}{Frob}
\DeclareMathOperator{\rank}{rank}
\DeclareMathOperator{\Tr}{Tr}
\DeclareMathOperator{\Sym}{Sym}
\DeclareMathOperator{\Sp}{Sp}
\DeclareMathOperator{\U}{U}
\DeclareMathOperator{\M}{M}
\newcommand{\N}{{\mathbb{N}}}
\newcommand{\Z}{{\mathbb{Z}}}
\newcommand{\C}{{\mathbb{C}}}
\newcommand{\scG}{\mathscr{G}}
\newtheorem{theorem}{Theorem}[section]
\newtheorem{corollary}[theorem]{Corollary}
\newtheorem{lemma}[theorem]{Lemma}
\newtheorem{proposition}[theorem]{Proposition}
\newtheorem*{lemma*}{Lemma}
\newtheorem*{proposition*}{Proposition}
\newtheorem*{theorem*}{Theorem}
\newtheorem*{corollary*}{Corollary}
\newtheorem*{claim*}{Claim}
\theoremstyle{definition}
\newtheorem{definition}[theorem]{Definition}
\newtheorem*{definition*}{Definition}
\theoremstyle{remark}
\newtheorem{remark}[theorem]{Remark}
\theoremstyle{definition}
\newtheorem{conjecture}[theorem]{Conjecture}
\newcommand\setItemnumber[1]{\setcounter{enumi}{\numexpr#1-1\relax}}
\title[]{On $L^1$-approximation of groups}
\author{Benjamin Bachner, Alon Dogon and Alexander Lubotzky}
\begin{document}

\maketitle
    
\begin{abstract}
A longstanding open problem in the intersection of group theory and operator algebras is whether all groups are MF, that is, approximated by asymptotic representations with respect to the operator norm.
More generally, in his ICM address from 2018, Thom asked whether there exist groups that are not approximated with respect to the Schatten $p$-norm for $1 \leq p \leq \infty$.
The cases of $1 < p < \infty$ were addressed in previous works of De Chiffre et al. and Lubotzky-Oppenheim.
Here we settle the case $p=1$.
  %
\end{abstract}

\section{Introduction}

Let $\mathscr G = \{ (G_n, d_n)\}$ be a family of groups $G_n$ equipped with bi-invariant metrics $d_n$.
A group $\Gamma$ is called \emph{$\mathscr G$-approximated} if there exist maps (not necessarily homomorphisms) $\varphi_n: \Gamma \to G_n$, such that:
\begin{enumerate}
\item The sequence $\varphi_n$  is an \emph{asymptotic homomorphism}, namely $\forall g,h \in \Gamma,\;\lim_{n\to \infty}d_n(\varphi_n(gh), \varphi_n(g)\varphi_n(h))  = 0$.
\item $\varphi_n$ \emph{asymptotically separates} $\Gamma$, that is, 
$$\forall g \in \Gamma \setminus \{1\}, \; \liminf_n \;d_n(\varphi_n(g), 1_{G_n}) > 0.$$
\end{enumerate}
The group $\Gamma$ is said to be \emph{$\scG$-stable} if for every asymptotic homomorphism $\varphi_n: \Gamma \to G_n$ (as in $(1)$ above), there exists \emph{true} homomorphisms $\psi_n: \Gamma \to G_n$ such that 
$$\forall g \in \Gamma, \; \lim_{n \to \infty} d_n(\varphi_n(g), \psi_n(g)) = 0.$$

The notions of approximation and stability have been studied quite intensively in recent years (cf. \cite{ThomICM, DGLT, LO, CL1, CL2, Dog} and the references therein).

The main example of families of metric groups $\scG$ studied are:
\begin{enumerate}
    \item $\scG_{\Hamm} = \{ (\Sym(n), d^{\text{Hamm}}_n) \}_n$, where for $\sigma, \tau \in \Sym(n)$, 
    $$ d^{\Hamm}_n(\sigma, \tau) = \frac{1}{n}|\{i \in [n] | \; \sigma(i) \neq \tau(i) \}|.$$
\end{enumerate}

In all the instances considered in this paper, we have $G_n = U(n)$, and the metric $d_n$ will be defined by a unitarily invariant norm $\| \cdot \|_n$ on the matrices $M_n(\C)$ via $d_n(u,v) = \| u-v \|_n$ for $u,v \in U(n)$. For $A \in M_n(\C)$, denote $|A| = \sqrt{A^*A}$, and now:
\begin{enumerate}
    \setItemnumber{2}
    \item $\scG_{\Frob}$: The \emph{Frobenius norm} $ \| A\|_{\Frob} = \Tr{(|A|^2)}^{1/2}$.
    \setItemnumber{3}
    \item $\scG_{\HS}$: The (normalized) \emph{Hilbert--Schmidt norm}, or simply \emph{HS-norm}, $ \| A \|_{\HS} = \frac{1}{\sqrt{n}} \| A\|_{\Frob}$.
    \setItemnumber{4}
    \item $\scG_p$: For $1 \leq p < \infty$, The \emph{Schatten $p$-norm} defined by $\| A\|_p = \Tr(|A|^p)^{1/p}$ (For $p=2$, this is the Frobenius norm).
    \setItemnumber{5}
    \item $\scG_{\op}$: The \emph{operator norm} norm $\| A \|_{\op} = \sup_{v \in \C^n, \|v\|=1} \| Av\|$. This is considered as the case $p = \infty$ and we write $\|A\|_{\infty} = \|A \|_{\op}$.
\end{enumerate}
Finding groups which are not $\scG$-approximated is notoriously hard. 
One of the motivations to study stability comes from the following observation (see \cite{GR, AP} for $\scG_{\Hamm}$, but valid in all the cases listed above).

\begin{proposition}
    \label{prop:Glebsky_Rivera}
    Let $\scG$ be a class as above, where $G_n$ are linear groups, and $\Gamma$ a finitely generated group.
    If $\Gamma$ is $\scG$-approximated and $\scG$-stable, then it is residually finite.
\end{proposition}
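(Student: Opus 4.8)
The plan is to fix an arbitrary $g \in \Gamma \setminus \{1\}$ and produce a homomorphism from $\Gamma$ to a finite group whose value at $g$ is nontrivial; since $g$ is arbitrary, this yields residual finiteness. The two hypotheses feed into one another as follows. First I would invoke $\scG$-approximation to obtain maps $\varphi_n : \Gamma \to G_n$ satisfying $(1)$ and $(2)$; in particular $\varphi_n$ is an asymptotic homomorphism with $\liminf_n d_n(\varphi_n(g), 1_{G_n}) > 0$. Then, since $\Gamma$ is $\scG$-stable and $\varphi_n$ is an asymptotic homomorphism, stability supplies genuine homomorphisms $\psi_n : \Gamma \to G_n$ with $\lim_n d_n(\varphi_n(h), \psi_n(h)) = 0$ for every $h \in \Gamma$, in particular for $h = g$.

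The next step is to check that the true homomorphisms $\psi_n$ inherit the separation property at $g$. By the (reverse) triangle inequality,
$$ d_n(\psi_n(g), 1_{G_n}) \geq d_n(\varphi_n(g), 1_{G_n}) - d_n(\varphi_n(g), \psi_n(g)), $$
and taking $\liminf$ while using $d_n(\varphi_n(g), \psi_n(g)) \to 0$ gives $\liminf_n d_n(\psi_n(g), 1_{G_n}) > 0$. In particular there exists an index $n$ (indeed all sufficiently large $n$) for which $\psi_n(g) \neq 1_{G_n}$. Fix such an $n$ and abbreviate $\psi = \psi_n$.

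The final, and really the only substantive, step is to pass from the true homomorphism $\psi : \Gamma \to G_n$ to a finite quotient. Here I would use that $G_n$ is a linear group, so that $\psi(\Gamma)$, being a finitely generated subgroup of $G_n$ (as $\Gamma$ is finitely generated), is itself a finitely generated linear group. By Malcev's theorem such a group is residually finite, so there is a homomorphism $\rho$ from $\psi(\Gamma)$ onto a finite group $F$ with $\rho(\psi(g)) \neq 1$. The composite $\rho \circ \psi : \Gamma \to F$ is then a homomorphism to a finite group that does not kill $g$, which is exactly what is required.

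I expect the main (in fact the sole) genuine input to be Malcev's theorem on the residual finiteness of finitely generated linear groups; everything else is routine manipulation of the definitions via the triangle inequality. The one point to keep straight is that the separation hypothesis is phrased with $\liminf$, so one must conclude $\psi_n(g) \neq 1_{G_n}$ only for large $n$ rather than for a single predetermined index, and then select one such $n$ to run the Malcev argument.
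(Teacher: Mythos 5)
Your proof is correct and is exactly the standard argument: the paper states this proposition as a known observation (citing \cite{GR, AP}) without writing out a proof, but the route you take --- transfer separation from $\varphi_n$ to the stabilized genuine homomorphisms $\psi_n$ via the triangle inequality, then apply Malcev's theorem to the finitely generated linear image --- is precisely the one the authors themselves reuse in the endgame of their proof of Theorem \ref{thm:main}.
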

This implies that if one found a finitely generated group $\Gamma$ which is $\scG$-stable and not residually finite, then it is \emph{not} $\scG$-approximated.
This line of proof was implemented first in \cite{DGLT} to give examples of non-Frobenius approximated groups, i.e. groups which are not $\mathscr G_2$-approximated.
Following this breakthrough, Thom asked in his ICM address \cite[Question 3.11]{ThomICM} about all $1 \leq p \leq \infty$.
In response, Lubotzky and Oppenheim \cite{LO} solved almost all cases and gave non-$\mathscr G_p$-approximated groups for $1 < p < \infty$.
These examples are the so called "$\ell$-adic Deligne central extensions", see \cite[Section 5.2]{DGLT} for their construction.
There are thus four cases left to handle: $p=1$, the Hamming distance on permutation groups, the HS-norm and the operator norm.

We prove that $\ell$-adic Deligne central extensions are not $\mathscr G_1$-approximated,
solving the case $p=1$ in Thom's question.
Interestingly, our method is different from the previous works -- using a short argument, we manage to establish that these groups are not $\mathscr G_1$-approximated, without the need to show that they are $\mathscr G_1$-stable. The question of $\mathscr G_1$-stability for these groups remains open.

With that said, this is only modest progress as the last three cases are the holy-grail of the subject: Groups which are $\scG_{\Hamm}$-approximated are called \emph{sofic}, the ones which are $\scG_{\HS}$-approximated are called \emph{hyperlinear} or \emph{Connes embeddable}, and the ones which are $\scG_{\op}$-approximated are called \emph{MF}\footnote{There are multiple, possibly inequivalent definitions for MF groups in the literature, this is the one introduced in  \cite{CDE}, see \cite{Schaf} for the more C$^*$-algebraic one.}.
Whether all groups are sofic, hyperlinear or MF are three major open problems in group theory. 
Given the previous works, it is natural to study whether the $\ell$-adic Deligne central extensions are the much desired \emph{concrete} counter examples to these three problems.
The reader is referred to \cite{CL1, CL2, GT, CDL} for efforts to prove they are not sofic, and to \cite{Dog} for efforts to prove that \emph{real} Deligne central extensions are not hyperlinear.

The novelty in our approach is the use of at least two \emph{different} norms on $\U(n)$ to deduce inapproximability.
Usually, when two different bi-invariant metrics $d_n\leq  d_n'$ are given on $G_n$, it is not clear that $\{G_n,d_n' \}$-approximation (respectively, $\{G_n,d_n'\}$-stability) implies $\{G_n,d_n \}$-approximation (respectively, $\{G_n,d_n\}$-stability), nor vice versa.

Denote by $\ker_{\mathrm{RF}}(\Gamma)$ the kernel of the map from $\Gamma$ to its profinite completion $\widehat{\Gamma}$. 
In other words, $\ker_{\mathrm{RF}}(\Gamma)$ consists of elements in $\Gamma$ that are trivial in all finite quotients.

\begin{definition}
    A finitely generated group $\Gamma$ is said to be \emph{of Deligne-type} if there is a central element $J \in \Gamma$ of order $2$ such that $J \in \ker_{RF}(\Gamma)$.
\end{definition}

See \cite{Deligne}  and \cite[Section 5]{DGLT} for examples, and many more in \cite{stover2024residual,Hill}.
Our main theorem is a version of Proposition \ref{prop:Glebsky_Rivera} which deals with two \emph{different} Schatten norms, that is applicable to groups of Deligne-type. This refines the previously taken route of proving stability in order to deduce inapproximability.

\begin{theorem}\label{thm:main}
    Let $\Gamma$ be a group of Deligne-type, and let $1 \leq p \leq q \leq \infty$. 
    If $\Gamma$ is $\mathscr G_q$-stable, then it is not $\mathscr G_{p}$-approximated.
    In particular, if $\Gamma$ is Frobenius stable, then it is not $\scG_p$-approximated for any $1 \leq p \leq 2$.
\end{theorem}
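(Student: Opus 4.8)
The plan is to argue by contradiction: suppose $\Gamma$ is $\scG_p$-approximated by maps $\varphi_n \colon \Gamma \to \U(n)$. The first observation is that since $p \le q$ the (non-normalized) Schatten norms obey $\|\cdot\|_q \le \|\cdot\|_p$ on $\M_n(\C)$, so the very same maps $\varphi_n$ form a $\scG_q$-asymptotic homomorphism. Applying the hypothesis that $\Gamma$ is $\scG_q$-stable, I would extract genuine homomorphisms $\psi_n \colon \Gamma \to \U(n)$ with $\|\varphi_n(g) - \psi_n(g)\|_q \to 0$ for every $g \in \Gamma$. The goal is then to contradict the separation property of $\varphi_n$ at the distinguished central involution $J$.

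The key structural input is that $J$ is killed by every finite-dimensional unitary representation of $\Gamma$. Indeed, $\psi_n(\Gamma) \le \U(n)$ is a finitely generated linear group, hence residually finite by Malcev's theorem; were $\psi_n(J) \ne I$, a finite quotient of $\psi_n(\Gamma)$ separating $\psi_n(J)$ from the identity would produce a finite quotient of $\Gamma$ in which $J$ survives, contradicting $J \in \ker_{\mathrm{RF}}(\Gamma)$. Therefore $\psi_n(J) = I$ for all $n$, and the stability estimate collapses to $\|\varphi_n(J) - I\|_q \to 0$. On the other hand, separation gives $\liminf_n \|\varphi_n(J) - I\|_p > 0$. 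These two facts are not yet contradictory: the weaker $q$-norm can vanish while the stronger $p$-norm stays bounded away from zero. This is exactly the obstruction that prevents one from transferring separation to the $\psi_n$, as would be done in the Glebsky--Rivera argument (Proposition \ref{prop:Glebsky_Rivera}); overcoming it is the heart of the matter.

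To break the impasse I would also exploit $J^2 = 1$: combined with the $\scG_p$-asymptotic homomorphism property applied to $J \cdot J = 1$ and the elementary fact $\|\varphi_n(1) - I\|_p \to 0$, this yields $\|\varphi_n(J)^2 - I\|_p \to 0$, so $u_n := \varphi_n(J)$ is an approximate involution in the $p$-norm. Letting $x_\lambda = |\lambda - 1|$ range over the unitary spectrum of $u_n$, one has $|\lambda^2 - 1| = x_\lambda \sqrt{4 - x_\lambda^2}$, and I would split the spectrum at $x_\lambda = \sqrt 2$. For the eigenvalues near $+1$ (where $x_\lambda \le \sqrt 2$) one has $|\lambda^2-1|^p \ge 2^{p/2} x_\lambda^p$, so approximate involutivity forces $\sum_{x_\lambda \le \sqrt 2} x_\lambda^p \to 0$; for the eigenvalues near $-1$ (where $x_\lambda > \sqrt 2 > 1$) one has $x_\lambda^p \le x_\lambda^q$, so $q$-smallness forces $\sum_{x_\lambda > \sqrt 2} x_\lambda^p \le \|\varphi_n(J) - I\|_q^q \to 0$ (for $q = \infty$ this reads $\max_\lambda x_\lambda \to 0$, leaving no mass beyond $\sqrt 2$). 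Adding the two estimates gives $\|\varphi_n(J) - I\|_p^p \to 0$, contradicting separation. The main obstacle is precisely this spectral bookkeeping: the two norms must be used in tandem, one to annihilate the separation mass that can accumulate near $+1$ and the other to annihilate the mass near $-1$, and arranging the split so that the whole spectrum is covered is where the argument genuinely has to work. The ``in particular'' claim is then the special case $q = 2$ and $1 \le p \le 2$.
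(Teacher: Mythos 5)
Your argument is correct, and it takes a genuinely different route from the paper's. The paper never applies $q$-stability to $\varphi_n$ itself; instead it first replaces $\varphi_n(J)$ by a true involution $U_n$ (Lemma \ref{lem:almost_involution}), cuts down to the nonzero $-1$-eigenspace of $U_n$ via the restriction result (Proposition \ref{prop:inv_sub_asym_hom}), and obtains a $\|\cdot\|_p$-asymptotic homomorphism $\psi_n$ on that subspace with $\|\psi_n(J)+1\|_p \to 0$. Since $\|{-1}-1\|_q \geq 2$ in every Schatten norm, separation of $J$ survives the passage from $p$ to $q$ on the subspace, and only then is $q$-stability invoked, with Malcev giving the contradiction. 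You instead apply $q$-stability globally, use Malcev and $J \in \ker_{\mathrm{RF}}(\Gamma)$ to force $\psi_n(J)=I$ and hence $\|\varphi_n(J)-I\|_q \to 0$, and close the gap between the $q$- and $p$-norms by a direct spectral computation: writing $x_\lambda = |\lambda-1|$ and $|\lambda^2-1| = x_\lambda\sqrt{4-x_\lambda^2}$ for the eigenvalues of the unitary (hence normal) $\varphi_n(J)$, the split at $x_\lambda=\sqrt 2$ correctly bounds the mass near $+1$ by $2^{-p/2}\|\varphi_n(J)^2-I\|_p^p$ and the mass near $-1$ by $\|\varphi_n(J)-I\|_q^q$ (using $x^p\le x^q$ for $x>1$), giving $\|\varphi_n(J)-I\|_p \to 0$ and contradicting separation. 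Your version is more elementary and self-contained for the Schatten setting, avoiding Lemmas \ref{lem:almost_involution}--\ref{lem:almost_unitary} and Proposition \ref{prop:inv_sub_asym_hom} entirely; the paper's subspace-restriction machinery is heavier but is exactly what generalizes to Propositions \ref{prop:hyp_MF}, \ref{prop:mixed_stability_implies_not_MF} and the concluding proposition on finite normal subgroups, where one must cut to isotypic components rather than just compare spectral masses. Two trivial edge cases you leave implicit: $p=\infty$ forces $q=\infty$ and the contradiction is immediate without the splitting, and $\|\varphi_n(1)-I\|_p\to 0$ follows from unitary invariance applied to $\|\varphi_n(1)^2-\varphi_n(1)\|_p\to 0$.
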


As a consequence of the construction of Deligne-type groups which are Frobenius stable obtained in \cite[Proposition 5.6 and Theorem 5.1]{DGLT}, we have:

\begin{corollary}
    There exist finitely presented groups which are not $\mathscr G_1$-approximated.
\end{corollary}

    In fact, by the more recent work of \cite[Theorem 1.3]{bader2023stability} and the original work of Deligne \cite{Deligne}, it also follows that the degree $4$-cover of $\Sp_{2g}(\Z)$, for $g \geq 3$, is not $\mathscr G_1$-approximated.
    

The interplay between two different matrix norms, one dominated by the other, is also useful in the context of the normalized Hilbert--Schmidt norm and the operator norm.
We again obtain a refinement to Proposition \ref{prop:Glebsky_Rivera}, which serves as a potential approach to tackle the MF question.

\begin{proposition}\label{prop:mixed_stability_implies_not_MF}
    Let $\Gamma$ be a group of Deligne type. Assume that the following holds:
    
     $(*)$ \emph{For every $\| \cdot \|_{\op}$-asymptotic homomorphism $\varphi_n: \Gamma \to \U(k_n)$, there exists a sequence of homomorphisms $\psi_n: \Gamma \to \U(k_n)$ such that for all $x \in \Gamma$, $\lim_{n} \| \varphi_n(x) - \psi_n(x) \|_{\HS} = 0$. }
    
    Then $\Gamma$ is not MF.
\end{proposition}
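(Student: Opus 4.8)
The plan is to argue by contradiction, exploiting the order-$2$ central element $J$ together with the gap between the operator norm and the normalized $\HS$-norm. Suppose $\Gamma$ is MF, and fix an operator-asymptotic homomorphism $\varphi_n \colon \Gamma \to \U(k_n)$ which asymptotically separates $\Gamma$; in particular $\liminf_n \norm{\varphi_n(J)-1}_{\op} > 0$, since $J \neq 1$. Write $v_n := \varphi_n(J)$. Because $J^2 = 1$, the asymptotic homomorphism property gives $\norm{v_n^2 - 1}_{\op} \to 0$, so the spectrum of the unitary $v_n$ concentrates near $\{+1,-1\}$, and separation forces the spectral projection $P_n^-$ onto the part of the spectrum near $-1$ to be nonzero for all large $n$ (otherwise $v_n$ would be $\op$-close to $1$). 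The heart of the argument is that compressing $\varphi_n$ to the range of $P_n^-$ \emph{amplifies} the $-1$-eigenvalue of $J$ from asymptotically negligible $\HS$-density to full density, which is exactly what is needed to contradict $(*)$.

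Concretely, first I would record that the centrality of $J$ gives $\norm{[\varphi_n(g),v_n]}_{\op}\to 0$ for each fixed $g$, since $\varphi_n(gJ)$ and $\varphi_n(Jg)$ are both $\op$-close to the respective products and $gJ=Jg$. Writing $P_n^-$ as a Riesz (contour) projection of $v_n$ around the arc near $-1$ --- whose resolvent is uniformly bounded because the two spectral clusters stay a definite distance apart --- one obtains $\norm{[\varphi_n(g),P_n^-]}_{\op}\to 0$ as well. Setting $r_n := \rk(P_n^-) \ge 1$, I would then define $\varphi_n^-(g)$ to be the compression $P_n^-\varphi_n(g)P_n^-$ restricted to $\mathrm{range}(P_n^-)$, corrected to a genuine element of $\U(r_n)$ (the compression is an approximate isometry on $\mathrm{range}(P_n^-)$ because $\varphi_n(g)$ is unitary and nearly commutes with $P_n^-$). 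Using the near-commutation to insert and delete copies of $P_n^-$, one checks that $\varphi_n^- \colon \Gamma \to \U(r_n)$ is again an operator-asymptotic homomorphism, and that $\varphi_n^-(J)$ is $\op$-close to $-I_{r_n}$, since $v_n$ acts as $-1$ on the range of $P_n^-$.

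Finally I would apply the hypothesis $(*)$ to the operator-asymptotic homomorphism $\varphi_n^- \colon \Gamma \to \U(r_n)$, obtaining true homomorphisms $\psi_n \colon \Gamma \to \U(r_n)$ with $\norm{\varphi_n^-(x)-\psi_n(x)}_{\HS} \to 0$ for all $x$. As in Proposition~\ref{prop:Glebsky_Rivera}, each image $\psi_n(\Gamma)$ is a finitely generated linear group, hence residually finite by Malcev; since $J \in \ker_{\mathrm{RF}}(\Gamma)$, the central involution $J$ must die in every finite quotient of $\psi_n(\Gamma)$, forcing $\psi_n(J) = 1$. Therefore $\norm{\varphi_n^-(J) - 1}_{\HS} \to 0$. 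On the other hand $\varphi_n^-(J)$ is $\op$-close, hence $\HS$-close, to $-I_{r_n}$, and $\norm{-I_{r_n}-1}_{\HS} = \norm{-2 I_{r_n}}_{\HS} = 2$ for every $r_n \ge 1$ precisely because the $\HS$-norm is normalized; so $\norm{\varphi_n^-(J)-1}_{\HS} \to 2$, a contradiction.

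The routine steps are standard; the one place requiring genuine care --- the main technical obstacle --- is making the compression rigorous: verifying that the spectral projection $P_n^-$ asymptotically commutes with the image (the Riesz-projection estimate, with uniform control of the resolvent coming from the spectral gap) and that the corrected compressions $\varphi_n^-$ genuinely land in $\U(r_n)$ and form an operator-asymptotic homomorphism. Everything else rests on the elementary observation that compression to the $-1$-eigenspace converts an operator-norm separation of $J$, which is invisible to the normalized $\HS$-norm, into a full-density $\HS$-obstruction that $(*)$ cannot tolerate.
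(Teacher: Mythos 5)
Your proposal is correct and follows essentially the same route as the paper: compress $\varphi_n$ to the $-1$-spectral subspace of $\varphi_n(J)$ (the paper first replaces $\varphi_n(J)$ by an exact involution via its Lemma~\ref{lem:almost_involution} and then takes the honest eigenprojection, where you use a Riesz projection directly --- a cosmetic difference), obtain an operator-asymptotic homomorphism sending $J$ near $-I$, and then combine hypothesis $(*)$ with Malcev and $J \in \ker_{\mathrm{RF}}(\Gamma)$ to reach the contradiction $2 = \lim_n \|\varphi_n^-(J)-1\|_{\HS} = 0$. The unitary correction of the compression is exactly the paper's Proposition~\ref{prop:inv_sub_asym_hom}, so no new gap is introduced.
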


We will refer to condition $(*)$ above as \emph{operator-HS-stability} (as in \cite{eckhardt2025residually}). 

Using a slightly more refined argument, we obtain the following monotonicity phenomena for approximability, which is of independent interest.

\begin{proposition}\label{prop:monotonicity_for_hyp_MF}
    Let $\Gamma$ be a finitely generated group with a finite normal subgroup $N$ such that $\Gamma / N$ is hyperlinear.
    If $\Gamma$ is MF, then it is hyperlinear.
\end{proposition}

\begin{proposition}\label{prop:monotonity_for_p-norm}
    Let $1 \leq p \leq q \leq \infty$. 
    Let $\Gamma$ be a finitely generated group with a finite normal subgroup $N$ such that $\Gamma / N$ is $\mathscr G_q$-approximated.
    If $\Gamma$ is $\mathscr G_p$-approximated, then it is $\mathscr G_{q}$-approximated.
\end{proposition}

Let us end with suggesting the following conjecture:

\begin{conjecture}\label{conj:main}
Let $\Gamma$ be a finitely generated group such that $H^2(\Gamma, V) = 0$ for every unitary representation of $\Gamma$ on a Hilbert space $V$, then $\Gamma$ is operator-HS-stable.
\end{conjecture}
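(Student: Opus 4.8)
The natural route is the cohomological one pioneered in \cite{DGLT}, adapted to the two-norm setting. Start with an operator-norm asymptotic homomorphism $\varphi_n : \Gamma \to \U(k_n)$ and form the operator-norm ultraproduct C$^*$-algebra $\mathcal{A} = \ultprod{n}\left(\M_{k_n}(\C), \|\cdot\|_{\op}\right)$. Since the defect $\varphi_n(gh) - \varphi_n(g)\varphi_n(h)$ tends to $0$ in operator norm, the sequence $\varphi := (\varphi_n)_n$ assembles into a \emph{genuine} group homomorphism $\varphi : \Gamma \to \U(\mathcal{A})$. Equip $\mathcal{A}$ with the limit trace $\tau = \ultlim{n}\frac{1}{k_n}\Tr$, let $\cM = \ultprod{n}\left(\M_{k_n}(\C), \|\cdot\|_{\HS}\right)$ be the associated tracial von Neumann ultraproduct, and let $q : \mathcal{A} \to \cM$ be the canonical trace-preserving $*$-homomorphism (well defined because $\|\cdot\|_{\HS} \leq \|\cdot\|_{\op}$). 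The desired conclusion, that honest $\psi_n$ can be found HS-close to $\varphi_n$, is equivalent to realizing $q \circ \varphi : \Gamma \to \U(\cM)$ by a sequence of honest finite-dimensional representations $\psi_n : \Gamma \to \U(k_n)$ with $(\psi_n)_n = q\circ\varphi$ in $\cM$.

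Next I would bring in the hypothesis through a first-order deformation argument. Conjugation through $\varphi$ makes the kernel $I = \ker q$ of operator-bounded, HS-null sequences into a $\Gamma$-module, and, after passing to the GNS Hilbert space $V = L^2(\cM, \tau)$, into a genuine unitary representation of $\Gamma$ on a Hilbert space, to which the standing hypothesis applies. Linearizing the correction problem—seeking a homomorphism of the form $g \mapsto \exp(b(g))\,\varphi(g)$ and expanding the homomorphism equation to first order—produces a $2$-cochain $c \in \ZZ^2(\Gamma, V)$ that records the obstruction to deforming $\varphi$ into a homomorphism which descends to finite dimensions. The assumption $H^2(\Gamma, V) = 0$ then yields a $1$-cochain $b$ with $\delta b = c$, and I would use $b$ to correct $q\circ\varphi$ into a homomorphism into $\U(\cM)$ that is HS-close to $\varphi$.

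The final step is descent from the ultraproduct to finite dimensions: a homomorphism $\Gamma \to \U(\cM)$ that is HS-close to $(\varphi_n)_n$ must be realized by honest homomorphisms $\psi_n : \Gamma \to \U(k_n)$ whose HS-distance to $\varphi_n$ tends to $0$ along the ultrafilter, exploiting that matrix algebras are residually finite-dimensional and that, $\omega$-almost everywhere, an exact representation of $\Gamma$ in the ultraproduct is approximated by exact representations in the factors.

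The main obstacle is the tension between the two norms, concentrated at the dimension-counting step. The $H^2$-vanishing hypothesis is exactly what drives the De Chiffre--Glebsky--Lubotzky--Thom machinery \cite{DGLT}, but that machinery yields only \emph{flexible} HS-stability: the honest representations it produces have the form $\psi_n : \Gamma \to \U(k_n + r_n)$ with an a priori uncontrolled padding $r_n$, whereas condition $(*)$ insists on $\psi_n : \Gamma \to \U(k_n)$ in the \emph{same} dimension. The point of starting from an operator-norm asymptotic homomorphism—and hence of the genuine $\U(\mathcal{A})$-representation $\varphi$—is precisely to supply enough rigidity to absorb or discard this padding; turning that heuristic into a proof, while keeping the cobounding cochain $b$ both HS-small and operator-bounded through the descent, is what I expect to be the crux. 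A secondary difficulty is the passage from finite generation to the finite presentation implicitly used when manipulating $2$-cocycles, which should require an additional compactness or presentation-approximation argument.
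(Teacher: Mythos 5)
This statement is a \emph{conjecture} in the paper: the authors offer no proof of it, and explicitly leave it open (they only note that, if true, it would imply the $\ell$-adic Deligne central extensions are not MF). So there is no argument of theirs to compare yours against; the only question is whether your proposal actually closes the problem. It does not, and to your credit you say so yourself: you flag the dimension-padding issue (flexible versus strict HS-stability) as the unresolved crux. That is indeed one of the genuine obstructions, and it is precisely why the statement is posed as a conjecture rather than proved by a routine adaptation of \cite{DGLT}.

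Beyond the gap you acknowledge, there are two further points where the sketch, as written, would not go through. First, your final ``descent'' step --- that a homomorphism $\Gamma \to \U(\cM)$ into the tracial ultraproduct which is HS-close to $(\varphi_n)_n$ must be realized by honest homomorphisms $\psi_n : \Gamma \to \U(k_n)$ --- is essentially the statement being proven, not a tool available for proving it. Exact unitary representations of a finitely generated group into $\ultprod{n}\left(\M_{k_n}(\C), \|\cdot\|_{\HS}\right)$ are in general \emph{not} approximable coordinatewise by exact representations; for finitely presented groups that approximability is, by definition, HS-stability, so invoking it is circular. (The \cite{DGLT} scheme avoids this by never passing to a homomorphism into the ultraproduct: it iteratively corrects the finite-level maps, with the defect shrinking quadratically at each step.) Second, the cobounding cochain $b$ produced by $H^2(\Gamma, V)=0$ with $V = L^2(\cM,\tau)$ is controlled only in the Hilbert-space ($\HS$-type) norm, not in operator norm; after one correction $g \mapsto \exp(b(g))\varphi(g)$ you therefore lose the operator-norm smallness of the defect that made $\varphi$ a genuine homomorphism into $\U(\mathcal{A})$ in the first place, so the rigidity you hoped would absorb the padding is not available at the second iteration. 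Turning your outline into a proof would require, at minimum, a mechanism that keeps operator-norm control through the cohomological correction, and an honest replacement for the descent step --- exactly the ingredients whose absence makes this a conjecture.
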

%

Recall that the $\ell$-adic Deligne central extensions of \cite{DGLT} satisfy the above cohomological vanishing condition (in fact, they have property $[T_2]$ of Bader and Sauer \cite{arXiv:2308.06517}), hence if Conjecture \ref{conj:main}
 is true, these groups are not MF.

\section{Preliminaries}

\subsection{Unitarily invariant matrix norms }

Throughout this subsection, for every $d\in \N$, we fix \emph{unitarily invariant norm} $\| \cdot \|$ on $\M_d(\C)$, meaning $\| uxv \|= \|x\|$ for all $u,v \in \U(d), x \in \M_d(\C)$.
Note that all the matrix norms mentioned in the introduction are instances of unitarily invariant norms.
 Let us recall some basic properties at this level of generality.

\begin{lemma}[Proposition 1.3 in \cite{DGLT}]\label{lem:uni_inv_norm_properties}
Let $A,B,C\in {\rm M}_d(\mathbb C)$, it holds that:
\begin{enumerate}
\item[$(1)$] $\Vert ABC\Vert\leq \|A\|_{\op}\Vert B\Vert \|C\|_{\op},$
\item[$(2)$] $\lVert A\rVert=\lVert A^*\rVert=\lVert |A|\rVert$,
\item[$(3)$] If $A$ and $B$ are positive semi-definite matrices and $A\leq B$, then $\lVert A\rVert\leq \lVert B\rVert.$
\end{enumerate}
\end{lemma}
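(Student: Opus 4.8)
The plan is to prove the three items in the order $(1)$, $(2)$, $(3)$, concentrating the real content in $(1)$ and then deriving the rest by soft manipulations. The one observation on which everything rests is that every $T \in \M_d(\C)$ with $\|T\|_{\op} \le 1$ is a convex combination of unitaries. To see this I would take a singular value decomposition $T = u\,\diag(s_1,\dots,s_d)\,v$ with $u,v \in \U(d)$ and $0 \le s_i \le 1$, write each $s_i = \cos\theta_i = \tfrac12(e^{i\theta_i}+e^{-i\theta_i})$, and thereby express $\diag(s_1,\dots,s_d) = \tfrac12(w + w^*)$ with $w = \diag(e^{i\theta_1},\dots,e^{i\theta_d}) \in \U(d)$; hence $T = \tfrac12 u w v + \tfrac12 u w^* v$ is the average of two unitaries.

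Granting this, item $(1)$ follows quickly. For a contraction $T$ and arbitrary $X$, writing $T = \sum_j \lambda_j w_j$ as a convex combination of unitaries $w_j$ and using the triangle inequality together with unitary invariance gives $\|TX\| \le \sum_j \lambda_j \|w_j X\| = \sum_j \lambda_j \|X\| = \|X\|$, and symmetrically $\|XT\| \le \|X\|$. Applying this to the normalized contractions $A/\|A\|_{\op}$ and $C/\|C\|_{\op}$ (the cases where either is zero being trivial) yields $\|ABC\| \le \|A\|_{\op}\,\|BC\| \le \|A\|_{\op}\,\|B\|\,\|C\|_{\op}$, which is $(1)$. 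For $(2)$ I would invoke the polar decomposition $A = u|A|$ with $u \in \U(d)$ genuinely unitary (in the square case the partial isometry always completes to a unitary), so that unitary invariance gives $\|A\| = \|u|A|\| = \||A|\|$ at once; and $A^* = |A|u^*$ gives $\|A^*\| = \||A|u^*\| = \||A|\| = \|A\|$.

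Finally, for $(3)$ I would first assume $B$ invertible and set $X = A^{1/2}B^{-1/2}$. The hypothesis $A \le B$ is equivalent to $B^{-1/2}AB^{-1/2} \le I$, which says $\|X\|_{\op}^2 = \|B^{-1/2}A^{1/2}\|_{\op}^2 = \|B^{-1/2}AB^{-1/2}\|_{\op} \le 1$, so $X$ is a contraction. A direct computation gives $XBX^* = A^{1/2}(B^{-1/2}BB^{-1/2})A^{1/2} = A^{1/2}A^{1/2} = A$, whence item $(1)$ yields $\|A\| = \|XBX^*\| \le \|X\|_{\op}\,\|B\|\,\|X^*\|_{\op} \le \|B\|$. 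For a general positive semidefinite $B$ I would apply this to $B + \varepsilon I$, which is invertible and still dominates $A$, and then let $\varepsilon \to 0$, using continuity of $\|\cdot\|$ on the finite-dimensional space $\M_d(\C)$.

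I expect the only delicate points to be bookkeeping rather than conceptual: ensuring the polar decomposition in $(2)$ supplies a true unitary rather than merely a partial isometry, verifying the identity $A = XBX^*$, and justifying the $\varepsilon \to 0$ passage in $(3)$. The load-bearing step is the representation of contractions as averages of unitaries; once that is in hand, unitary invariance of $\|\cdot\|$ does all the remaining work.
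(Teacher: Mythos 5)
Your proof is correct, but note that the paper itself gives no proof of this lemma: it is quoted verbatim as Proposition~1.3 of \cite{DGLT}, where the standard machinery of unitarily invariant norms is invoked --- namely the correspondence with symmetric gauge functions on singular values and Ky Fan--type majorization (for $(1)$ one uses $s_j(ABC)\le \|A\|_{\op}\,s_j(B)\,\|C\|_{\op}$, and for $(3)$ Weyl's monotonicity $s_j(A)\le s_j(B)$ for $0\le A\le B$). Your route is genuinely different and, I would say, more self-contained: the single observation that every contraction in $\M_d(\C)$ is the average of two unitaries (your $\diag(s_i)=\tfrac12(w+w^*)$ trick, a finite-dimensional Russo--Dye argument) reduces $(1)$ to the triangle inequality plus unitary invariance, $(2)$ follows from the unitary polar decomposition (which does exist for square matrices, as you note), and $(3)$ follows from $(1)$ via the clean identity $A=XBX^*$ with $X=A^{1/2}B^{-1/2}$ a contraction, plus the routine $\varepsilon$-regularization when $B$ is singular. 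All steps check out, including the computation $\|X\|_{\op}^2=\|B^{-1/2}AB^{-1/2}\|_{\op}\le 1$ and the limit $\|B+\varepsilon I\|\to\|B\|$. What the majorization approach buys is generality and uniformity across all three items from one structural theorem; what yours buys is that it needs nothing beyond the definition of unitary invariance and elementary matrix decompositions, which is arguably better suited to a self-contained exposition.
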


The following lemma guarantees that any unitary which is close to being an involution is close to a  unitary which is also an involution.
This plays a key role in the proof of Theorem \ref{thm:main}.

\begin{lemma} [Proposition 1.4 in \cite{DGLT}]\label{lem:almost_involution}
Let $A\in{\rm U}(d)$. Then there is a unitary $B\in{\U}(d)$ such that $B^2=1$ and
\[\lVert B-A\lVert\leq \lVert 1 -A^2\rVert,\]
for all unitarily invariant norms.
\end{lemma}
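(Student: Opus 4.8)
The plan is to diagonalize $A$ via the spectral theorem, round each eigenvalue to the nearest square root of $1$, and then exploit unitary invariance to reduce the norm comparison to an entrywise inequality on the unit circle.

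First I would write $A = U D U^*$ with $U \in \U(d)$ and $D = \diag(\lambda_1, \dots, \lambda_d)$, where each $\lambda_j$ lies on the unit circle since $A$ is unitary. For each $j$ I would pick $\mu_j \in \{+1, -1\}$ to be whichever root of $x^2 = 1$ is closer to $\lambda_j$, i.e.\ $\mu_j = 1$ when $\Ra(\lambda_j) \geq 0$ and $\mu_j = -1$ otherwise, and set $B = U \diag(\mu_1, \dots, \mu_d) U^*$. Then $B$ is a self-adjoint unitary with $B^2 = 1$ by construction. By unitary invariance of the norm, $\|B - A\| = \|\diag(\mu_j - \lambda_j)\|$ and $\|1 - A^2\| = \|\diag(1 - \lambda_j^2)\|$, so it suffices to compare these two diagonal matrices.

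The core of the argument is the pointwise estimate $|\mu_j - \lambda_j| \leq |1 - \lambda_j^2|$ for each $j$. Here I would use the factorization $1 - \lambda_j^2 = (1 - \lambda_j)(1 + \lambda_j)$, so that $|1 - \lambda_j^2| = |1 - \lambda_j|\,|1 + \lambda_j|$, while the choice of $\mu_j$ gives $|\mu_j - \lambda_j| = \min(|1 - \lambda_j|, |1 + \lambda_j|)$. Since $|\lambda_j| = 1$, one computes $|1 - \lambda_j|^2 + |1 + \lambda_j|^2 = 4$, whence the larger of the two factors is at least $\sqrt 2 \geq 1$; multiplying the smaller factor by this larger factor can only increase it, which yields the desired inequality (the case $\lambda_j = \pm 1$ being trivial).

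Finally I would upgrade the entrywise inequality to the norm inequality using Lemma~\ref{lem:uni_inv_norm_properties}. The matrices $|\diag(\mu_j - \lambda_j)| = \diag(|\mu_j - \lambda_j|)$ and $|\diag(1 - \lambda_j^2)| = \diag(|1 - \lambda_j^2|)$ are positive semidefinite and diagonal, and by the pointwise bound their difference is a diagonal matrix with nonnegative entries, hence positive semidefinite; thus $|\diag(\mu_j - \lambda_j)| \leq |\diag(1 - \lambda_j^2)|$. Applying part $(2)$ of the lemma to replace each matrix by its absolute value and then part $(3)$ (monotonicity on positive semidefinite matrices) gives $\|B - A\| \leq \|1 - A^2\|$ for every unitarily invariant norm. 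I expect the main subtlety to be the pointwise comparison on the circle -- in particular ensuring the inequality is uniform over all eigenvalues, including the boundary case $\Ra(\lambda_j) = 0$ -- rather than the final monotonicity step, which is essentially immediate from the stated lemma.
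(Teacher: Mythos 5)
Your proof is correct, and it is essentially the standard argument behind the cited result (the paper itself only quotes Proposition~1.4 of \cite{DGLT} without reproving it): diagonalize, round each unimodular eigenvalue to the nearer of $\pm 1$, verify $|\mu_j-\lambda_j|=\min(|1-\lambda_j|,|1+\lambda_j|)\leq|1-\lambda_j||1+\lambda_j|$ using $|1-\lambda_j|^2+|1+\lambda_j|^2=4$, and conclude via parts $(2)$ and $(3)$ of Lemma~\ref{lem:uni_inv_norm_properties}. Nothing further is needed.
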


We will also need the following folklore lemma.

\begin{lemma}\label{lem:almost_unitary}
    For every $n \in \N$ and $M \in \M_n(\C)$, there exists $R \in \U(n)$ such that $\| M - R \| \leq \| M^*M - 1 \|$ for all unitarily invariant norms. 
\end{lemma}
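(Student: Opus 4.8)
The plan is to use the singular value decomposition of $M$ and reduce the matrix inequality to a scalar inequality on the singular values, then invoke the monotonicity properties of unitarily invariant norms recorded in Lemma \ref{lem:uni_inv_norm_properties}. First I would write $M = U \Sigma V^*$ with $U, V \in \U(n)$ and $\Sigma = \diag(s_1, \dots, s_n)$, where $s_1, \dots, s_n \geq 0$ are the singular values of $M$. I would then take $R = U V^* \in \U(n)$ as the candidate unitary, i.e.\ the unitary factor appearing in the polar decomposition; note that $R$ is genuinely unitary even when $M$ is singular, since both $U$ and $V$ are unitary and we form their product directly rather than reconstructing it from a (possibly non-unique) polar factor.

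Next, using unitary invariance of the norm, I would compute
\[ \| M - R \| = \| U \Sigma V^* - U V^* \| = \| \Sigma - 1 \| = \| \diag(s_1 - 1, \dots, s_n - 1) \|, \]
and similarly, since $M^*M = V \Sigma^2 V^*$,
\[ \| M^*M - 1 \| = \| \Sigma^2 - 1 \| = \| \diag(s_1^2 - 1, \dots, s_n^2 - 1) \|. \]
So it remains to compare the norms of two diagonal matrices.

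The heart of the matter is then the elementary observation that, because each $s_i \geq 0$, we have $s_i + 1 \geq 1$ and hence $|s_i - 1| \leq |s_i - 1|(s_i + 1) = |s_i^2 - 1|$ for every $i$. Consequently the positive semidefinite diagonal matrices $\diag(|s_i - 1|)$ and $\diag(|s_i^2 - 1|)$ satisfy $\diag(|s_i - 1|) \leq \diag(|s_i^2 - 1|)$ in the Loewner order. Applying parts $(2)$ and $(3)$ of Lemma \ref{lem:uni_inv_norm_properties} gives
\[ \| \diag(s_i - 1) \| = \| \diag(|s_i - 1|) \| \leq \| \diag(|s_i^2 - 1|) \| = \| \diag(s_i^2 - 1) \|, \]
which combined with the two identities above yields $\| M - R \| \leq \| M^*M - 1 \|$.

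I expect no serious obstacle, as the argument is entirely standard; the only points requiring minor care are the choice of a genuine unitary $R$ when $M$ is not invertible (handled by setting $R = UV^*$ directly from the SVD) and the verification that entrywise domination of the diagonals forces the Loewner order, which is immediate for diagonal matrices and is exactly what licenses the use of Lemma \ref{lem:uni_inv_norm_properties}$(3)$.
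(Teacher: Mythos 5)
Your argument is correct and is essentially identical to the paper's proof: the same singular value decomposition, the same choice $R = UV^*$, the same scalar inequality $|s-1| \leq |s^2-1|$ for $s \geq 0$, and the same appeal to parts $(2)$ and $(3)$ of Lemma \ref{lem:uni_inv_norm_properties}. Nothing further is needed.
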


The above Lemma is stated in the literature for various unitarily invariant norms separately. 
We note that it in fact has a uniform proof using the singular value decomposition that can be found in \cite[Lemma 2.2]{AD}, by simply observing that the proof carries over for arbitrary unitarily invariant norms.
%
%
%
    
Using this, we can prove the following regarding restriction of $\| \cdot \|_p$-asymptotic homomorphisms to almost invariant subspaces.

\begin{proposition}\label{prop:inv_sub_asym_hom}
Let $1 \leq p \leq \infty$, and let $\varphi_n: \Gamma \to \U(d_n)$ be a $\Vert \cdot \Vert_p$-asymptotic homomorphism. Assume that there exist nonzero projections $P_n \in \M_n(\C)$ such that $\| \varphi_n(g)P_n - P_n \varphi_n(g) \|_p \to 0$ for all $g \in \Gamma$.
Then there exists a $\Vert \cdot \|_p$-asymptotic homomorphism $\psi_n: \Gamma \to \U(\Ima(P_n))$ such that:
$$ \lim_n \| \psi_n(g) - P_n \varphi_n(g) P_n\|_p = 0 \text{ for all $g \in \Gamma$.}$$
\end{proposition}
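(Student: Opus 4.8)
The plan is to take the obvious candidate $M_n(g) := P_n \varphi_n(g) P_n$, viewed as an operator on the subspace $\Ima(P_n)$, and correct it to a unitary $\psi_n(g) \in \U(\Ima(P_n))$ using Lemma \ref{lem:almost_unitary}. To make this work I first need to verify that $M_n(g)$ is, in the operator it defines on $\Ima(P_n)$, an approximate unitary in the $\|\cdot\|_p$-sense, i.e. that $\|M_n(g)^* M_n(g) - P_n\|_p \to 0$ where here $P_n$ is the identity on $\Ima(P_n)$.

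The key computation is as follows. Since $P_n$ nearly commutes with $\varphi_n(g)$ and $\varphi_n(g)$ is unitary, one expects $P_n \varphi_n(g) P_n$ to behave like a unitary compression. Concretely, writing $u = \varphi_n(g)$ and $P = P_n$, I would estimate
\begin{align*}
\| P u^* P u P - P \|_p &= \| P u^* (Pu - uP) P + P u^* u P - P \|_p \\
&= \| P u^* (Pu - uP) P \|_p,
\end{align*}
using $u^* u = 1$. By part $(1)$ of Lemma \ref{lem:uni_inv_norm_properties} together with $\|P\|_{\op}, \|u^*\|_{\op} \leq 1$, this is bounded by $\| Pu - uP \|_p \to 0$. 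Thus $M_n(g)$, as an operator on $\Ima(P_n)$, satisfies $\|M_n(g)^* M_n(g) - 1\|_p \to 0$. Applying Lemma \ref{lem:almost_unitary} (on the Hilbert space $\Ima(P_n)$) produces unitaries $\psi_n(g) \in \U(\Ima(P_n))$ with $\|\psi_n(g) - M_n(g)\|_p \leq \|M_n(g)^* M_n(g) - 1\|_p \to 0$, which is exactly the desired approximation statement.

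It remains to check that $\psi_n$ is a $\|\cdot\|_p$-asymptotic homomorphism. Since $\|\psi_n(g) - M_n(g)\|_p \to 0$ for each $g$, it suffices to show the $M_n$ are asymptotically multiplicative, i.e. $\|M_n(gh) - M_n(g)M_n(h)\|_p \to 0$. Here the near-commutation of $P_n$ with $\varphi_n$ is used again: I would write $M_n(g)M_n(h) = P u_g P u_h P$ and insert/remove a projection, estimating $\|P u_g P u_h P - P u_g u_h P\|_p = \|P u_g (P u_h - u_h P) P\|_p \leq \|P u_h - u_h P\|_p \to 0$, and then using that $\varphi_n$ itself is an asymptotic homomorphism to pass from $P u_g u_h P$ to $P \varphi_n(gh) P = M_n(gh)$, all bounds being uniform because the operator norms of unitaries and projections are at most $1$.

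The only genuine subtlety, rather than a true obstacle, is bookkeeping the two notions of identity: the ambient $1 \in \M_n(\C)$ versus $P_n$ regarded as the identity of $\U(\Ima(P_n))$. One must be careful that Lemma \ref{lem:almost_unitary} is applied inside the compressed space $\Ima(P_n)$, where $M_n(g)$ restricts to a genuine square matrix and the relevant identity is $P_n$; the unitary invariance of $\|\cdot\|_p$ and the fact that all the correction estimates only ever use $\|\cdot\|_{\op} \leq 1$ for unitaries and projections make every step uniform in $n$, so no additional hypotheses are needed.
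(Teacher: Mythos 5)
Your proposal is correct and follows essentially the same route as the paper: compress to $M_n(g)=P_n\varphi_n(g)P_n$, use the near-commutation together with Lemma \ref{lem:uni_inv_norm_properties}(1) and $\|P_n\|_{\op}=\|\varphi_n(g)\|_{\op}=1$ to get asymptotic multiplicativity and near-unitarity relative to the identity $P_n$ of $\Ima(P_n)$, then correct to genuine unitaries via Lemma \ref{lem:almost_unitary}. The only cosmetic difference is that you verify $\|M_n(g)^*M_n(g)-P_n\|_p\to 0$ by a direct computation, whereas the paper deduces it from the multiplicativity estimate and the unitarity of $\varphi_n(g)$; both are fine.
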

\begin{proof}
    Define $\widetilde{\varphi}_n(g) = P_n \varphi_n(g) P_n \in \M_{\rank(P_n)}(\C)$. 
    Observe that we have for all $g,h \in \Gamma$:
    \begin{align*}
        \| \widetilde{\varphi}_n(g) &\widetilde{\varphi}_n(h) - \widetilde{\varphi}_n(gh) \|_p = \| P_n \varphi_n(g) P_n \varphi_n(h) P_n - P_n \varphi_n(gh) P_n \|_p \\
        & \leq \| P_n \varphi_n(g)  \varphi_n(h) P_n - P_n \varphi_n(gh) P_n \|_p +  \|P_n [\varphi_n(g) , P_n ] \varphi_n(h) P_n \|_p \\
        &\leq \| \varphi_n(g)  \varphi_n(h) - \varphi_n(gh) \|_p + \| [\varphi_n(g) , P_n ] \|_p.
    \end{align*}
    Note that we used Lemma \ref{lem:uni_inv_norm_properties} and that $\| P_n \|_{\op} =1$.
    Consequently, we have $\lim_n \| \widetilde{\varphi}_n(g) \widetilde{\varphi}_n(h) - \widetilde{\varphi}_n(gh) \|_p = 0$ for all $g,h \in \Gamma$.
    Since $\varphi_n(g)$ is unitary, this also shows $\lim_n \| \widetilde{\varphi}_n(g)^* \widetilde{\varphi}_n(g) - P_n \|_p = 0$.
    By Lemma \ref{lem:almost_unitary}, we can replace $\widetilde{\varphi}_n(g)$ by unitaries $\psi_n(g) \in \U(\Ima(P_n))$ which are asymptotically close in $\| \cdot \|_p$ to $\widetilde{\varphi}_n(g)$, and $\psi_n: \Gamma \to \U(\Ima(P_n))$ will be the desired asymptotic homomorphism.
    \end{proof}

\section{Proofs}

We start by proving the main result\footnote{We thank Andreas Thom for suggesting a shortcut for our original proof.}.

\begin{proof}[Proof of Theorem \ref{thm:main}]

Let $\Gamma$ be a group of Deligne type, and $J \in \ker_{RF}(\Gamma)$ a central element of order $2$.
Let $1 \leq p \leq q \leq \infty$, and assume that $\Gamma$ is $\mathscr G_q$-stable.
Assume by contradiction that $\Gamma$ is $\mathscr G_p$-approximated.
Let $\varphi_n: \Gamma \to \U(d_n)$ be a $\| \cdot \|_p$-asymptotic homomorphism which is $\| \cdot \|_p$-separating.
By passing to a subsequence, we may assume that there exists $\delta > 0$ such that $\| \varphi_n(J) - 1 \|_p \geq  \delta$ for all $n \in \N$.
Since $J^2 = 1$, we have $\lim_n \| \varphi_n(J)^2 - 1 \|_p =0$. 
Consequently, by Lemma \ref{lem:almost_involution} there exist $U_n \in \U(d_n)$ with $U_n^2 =1$ such that $\lim_n \| U_n - \varphi_n(J) \|_p = 0$. 
As such, For all $n$ large enough we have $\| U_n -1 \|_p \geq \delta/2$, in particular, $U_n \neq 1$.
Since $U_n^2=1$, this implies that it must have an eigenvector with eigenvalue $-1$, which implies $\| U_n - 1 \|_q \geq \| U_n - 1\|_{\op} \geq 2$.

 %
 Now, since $ \| \cdot \|_p \geq \| \cdot \|_q$, we conclude that $\varphi_n$ is also a $\| \cdot \|_q$-asymptotic homomorphism, and that $\| \varphi_n(J) - 1 \|_q \geq 1$ for $n$ large enough.
 Since $\Gamma$ is $\scG_{q}$-stable, there exists a sequence of genuine representations $\rho_n: \Gamma \to \U(d_n)$ such that $\lim_n \| \rho_n(g) - \varphi_n(g) \|_{q} = 0$ for all $g \in \Gamma$, in particular for $g = J$.
 This implies that $\rho_n(J) \neq 1$ for $n$ large enough.
 Thus, by Malcev's theorem, we can  find a finite quotient of $\Gamma$ in which $J$ is not trivial, contradicting $J \in \ker_{RF}(\Gamma)$.
\end{proof}

\begin{remark}
    In the above proof the fact that $J$ is central is never used, so Theorem \ref{thm:main} actually holds for any group $\Gamma$ that has an element $J \in \ker_{\mathrm{RF}}(\Gamma)$ of order $2$. However, the centrality is used in the proof of Proposition \ref{prop:mixed_stability_implies_not_MF} below.
\end{remark}

\begin{proof}[Proof of Proposition \ref{prop:mixed_stability_implies_not_MF}]

Let $\Gamma$ be a group of Deligne type, and $J \in \ker_{RF}(\Gamma)$ a central element of order $2$.
We show that if $\Gamma$ is operator-HS-stable, then it is not MF.
Assume by contradiction that there exists a $\| \cdot \|_{\op}$-asymptotic homomorphism  $\varphi_n: \Gamma \to \U(d_n)$ for which there exists $\delta > 0$ such that $\| \varphi_n(J) - 1 \|_{\op} \geq  \delta$ for all $n \in \N$.
As before, by Lemma \ref{lem:almost_involution} there exist $U_n \in \U(d_n)$ with $U_n^2 =1$ such that $\lim_n \| U_n - \varphi_n(J) \|_{\op} = 0$.
Thus, we may replace $\varphi_n(J)$ by $U_n$ and assume without loss of generality that $\varphi_n(J)^2 = 1$ for all $n$.
Let $P_n = (\varphi_n(J) -1)/2$ be the projection onto the $-1$-eigenspace of $\varphi_n(J)$, and note that $P_n$ is a nonzero projection for all $n$ large enough.

Since $J$ is central and $\varphi_n$ is an $\|\cdot\|_{\op}$-asymptotic homomorphism, we have $\lim_n \| [\varphi_n(g), \varphi_n(J)] \|_{\op} = 0$ for all $g \in \Gamma$.
As such, $\lim_n \| [\varphi_n(g), P_n] \|_{\op} = 0$, 
so that $\varphi_n(g)$ all asymptotically preserve the subspace $\Ima(P_n)$.
Thus, by Proposition \ref{prop:inv_sub_asym_hom}, there exists a $\|\cdot\|_{\op}$-asymptotic homomorphism $\psi_n: \Gamma \to \U(\Ima(P_n))$ such that $\lim _n\| \psi_n(g) - P_n \varphi_n(g) P_n \|_{\op} = 0$ for all $g \in \Gamma$.
In particular, $\| \psi_n(J) + 1 \|_{\HS} \leq \| \psi_n(J)  - (-1) \|_{\op}  = \| \psi_n(J) - P_n \varphi_n(J) P_n\|_{\op} \to 0 $.
 Since $\Gamma$ is operator-HS-stable, there exists a sequence of genuine representations $\rho_n: \Gamma \to \U(\Ima(P_n))$ such that $\lim_n \| \rho_n(g) - \psi_n(g) \|_{\HS} = 0$ for all $g \in \Gamma$.
 All together this implies that $\rho_n(J) \neq 1$ for $n$ large enough.
 Thus, by Malcev's theorem, we can  find a finite quotient of $\Gamma$ in which $J$ is not trivial, contradicting $J \in \ker_{RF}(\Gamma)$.
\end{proof}

It is worthwhile to note that the argument above, which allows to construct an asymptotic homomorphism $\varphi_n: \Gamma \to \U(d_n)$ with the additional property $\varphi_n(J) = -1$, is analogous to \cite[Corollary 2.5]{CDL} in the setting of sofic approximations.
We now prove Proposition \ref{prop:monotonicity_for_hyp_MF} and leave the proof of Proposition \ref{prop:monotonity_for_p-norm} to the reader, as it has the same proof once $\| \cdot \|_{\op}$ is replaced by $\| \cdot \|_{p}$ and $\| \cdot \|_{\HS}$ is replaced by $\| \cdot \|_q$.


\begin{proof}[Proof of Proposition \ref{prop:monotonicity_for_hyp_MF}]
We first prove the proposition under the assumption that $N$ is central in $\Gamma$.
Assume $\Gamma / N$ is hyperlinear, $\Gamma$ is MF and let $\varphi_n: \Gamma \to \U(d_n)$ be a $\| \cdot \|_{\op}$-asymptotic homomorphism which is $\| \cdot \|_{\op}$-separating.
To show $\Gamma$ is hyperlinear, it is enough to show that every $1 \neq g_0 \in N$ can be separated by $ \| \cdot \|_{\HS}$-asymptotic homomorphisms. Indeed, all elements in $\Gamma \setminus N$ can be separated by $\| \cdot \|_{\HS}$-asymptotic homomorphisms of $\Gamma / N$ by assumption. As such, one can take the direct sum of finitely many $\| \cdot \|_{\HS}$-asymptotic homomorphisms separating each nontrivial element of $N$ together with an $\| \cdot \|_{\HS}$-asymptotic homomorphism separating elements in  $\Gamma \setminus N$, which will yield a $\| \cdot \|_{\HS}$-separating asymptotic homomorphism of $\Gamma$.

Since the finite group $N$ is $\mathscr{G}_{\op}$-stable (see for example \cite{de2019operator}), there exists a sequence of homomorphisms $\alpha_n: N \to U(d_n)$ which are asymptotically close to the restriction of $\varphi_n$ to $N$ with respect to $\| \cdot \|_{\op}$.
Thus, we may replace the restriction of $\varphi_n$ to $N$ by $\alpha_n$, and assume without loss of generality that the restriction of $\varphi_n$ to $N$ is a homomorphism.
Since $\| \varphi_n(g_0) - 1\|_{\op}$ is bounded away from $0$,  we can find  characters $\chi_n \in \widehat{N}$ such that $\chi_n(g_0)$ is bounded away from $1$ and $P_{\chi_n}$, the projection onto the $\chi_n$-isotypic component of $\alpha_n$, is nontrivial.
Since $N$ is finite and central, it can be shown that $\lim_n \| [ P_{\chi_n}, \varphi_n(g)] \|_{\op} = 0$ for every $g \in \Gamma$, and as a result we can use Proposition \ref{prop:inv_sub_asym_hom} to find a $\| \cdot \|_{\op}$-asymptotic homomorphism $\psi_n: \Gamma \to U(\Ima  P_{\chi_n})$ such that $\| \psi_n(g_0) - \chi_n(g_0) \cdot I\|_{\op} \to 0$. 
Since $\| \cdot \|_{\HS} \leq \| \cdot \|_{\op}$, we see that $\psi_n$ is a $\| \cdot \|_{\HS}$-asymptotic homomorphism for which $\| \psi_n(g_0) - \chi_n(g_0) \cdot I\|_{\HS} \to 0$, in particular it $\| \cdot \|_{\HS}$-asymptotically separates $g_0$ and we are done.

In case $N \triangleleft \Gamma$ is not central, we can consider its centralizer $\Lambda \leq \Gamma$.
As $N$ is finite and normal, $\Lambda \leq \Gamma$ has finite index.
If $\Gamma$ is MF, so is $\Lambda$.
Note that $N \cap \Lambda \leq \Lambda $ is central and $\Lambda  / (N\cap \Lambda ) \simeq \Lambda N / N \leq \Gamma / N$ is hyperlinear as well. 
It follows from the previous proof that $\Lambda $ is hyperlinear.
As $\Lambda \leq \Gamma$ has finite index, it follows that $\Gamma$ is hyperlinear as well.
\end{proof}

\subsection*{Acknowledgments} 
We are grateful to Andreas Thom for several useful comments which helped improve the paper, including a shortcut in the proof of Theorem \ref{thm:main}. We also thank Forrest Glebe for helpful remarks.

\subsection*{Funding}
All three authors were supported by the European Research
Council (ERC) under the European Union’s Horizon 2020 program (N. 882751), and a research grant from the Center for New Scientists at the Weizmann Institute of Science. AD was also supported by a Clore scholars grant from the Clore Israel Foundation.

\bibliographystyle{amsplain}
\bibliography{ref}

\vspace{0.5cm}

\noindent{\textsc{Department of Mathematics, Weizmann Institute of Science, Israel}}

\noindent{\textit{Email address:} \texttt{benjamin.bachner@weizmann.ac.il}} \\
\noindent{\textit{Email address:} \texttt{alon.dogon@mail.huji.ac.il}} \\
\noindent{\textit{Email address:} \texttt{alex.lubotzky@mail.huji.ac.il.}} \\

\end{document}